\theoremstyle{plain}
\newtheorem{theorem}{Theorem}[section]
\newtheorem{proposition}[theorem]{Proposition}
\newtheorem{fact}[theorem]{Fact}
\newtheorem{claim}[theorem]{Claim}
\theoremstyle{definition} 
\newtheorem{definition}[theorem]{Definition}
\newtheorem{remark}[theorem]{Remark}
\theoremstyle{remark}
\newtheorem{hypothesis}[theorem]{Hypothesis}
\renewcommand{\phi}{\varphi}
\newcommand{\initial}\lessdot
\newcommand{\N}{\mathcal N}
\def\?{?\vadjust

{\vbox to 0pt{\vskip-7pt\hbox to 1.1\hsize{\hfill\huge ?!}}}}
\newcommand{\be}{\begin{enumerate}}
\newcommand{\ee}{\end{enumerate}}
\newcommand{\case}{\emph}
\renewcommand{\epsilon}{\varepsilon}
\newcommand{\nf}{\allowbreak}
\newtheorem{the main theorem}{The main theorem}[section]
 \def\nfork{\setbox0\hbox{$\bigcup$}%
 \setbox1=\hbox to \wd0{\hfil\vrule width 0.7pt depth 2pt height 7.5pt\hfil}%
 \wd1=0cm\relax\box1\box0}
 \def\dnf{\mathop{\nfork}\limits}
\begin{document}
\title{Non Forking Good Frames Without Local Character}

\author{Adi Jarden}
\address{Department of Mathematics.\\ Bar-Ilan University \\ Ramatgan 52900, Israel}
\email[Adi Jarden]{jardenadi@gmail.com}

\author{Saharon Shelah}
\address{Institute of Mathematics\\ Hebrew University\\ Jerusalem, Israel\\ and Rutgers University\\
 New Brunswick\\ NJ, U.S.A}
\email[Saharon Shelah]{shelah@math.huji.ac.il}


\maketitle


\begin{abstract}
We continue \cite{shh}.II, studying stability theory for abstract
elementary classes. In \cite{she46}, Shelah obtained a non-forking
relation for an AEC, $(K,\preceq)$, with $LST$-number at most
$\lambda$, which is categorical in $\lambda$ and $\lambda^+$ and
has less than $2^{\lambda^+}$ models of cardinality
$\lambda^{++}$, but at least one. This non-forking relation
satisfies the main properties of the non-forking relation on
stable first order theories, but only a weak version of the local
character.

Here, we improve this non-forking relation such that it satisfies
the local character, too. Therefore it satisfies the main
properties of the non-forking relation on superstable first order
theories.

Using results of \cite{shh}.II, we conclude that the function
$\lambda \to I(\lambda,K)$, which assigns to each cardinal
$\lambda$, the number of models in K of cardinality $\lambda$, is
not arbitrary.
\end{abstract}

\tableofcontents

\section{Preliminaries}
Familiarity with AEC's is assumed.
\begin{hypothesis}
\mbox{}
\begin{enumerate}
\item $(K,\preceq)$ is an AEC. \item $\lambda$ is a cardinal.
\item The Lowenheim Skolem Tarski number of $(K,\preceq)$,
$LST(K,\preceq)$, is at most $\lambda$.
\end{enumerate}
\end{hypothesis}



\begin{definition}
Suppose $M_0\prec N$ in $K_\lambda$. We say that $N$ is
\emph{universal} over $M_0$ if for every $M_1\succ M_0$, there is
an embedding of $M_1$ into $N$ over $M_0$, namely, that fixes
$M_0$.
\end{definition}

The following proposition is a version of Fodor's Lemma (there is
no mathematical reason to choose this version, but we think that
it is convenient).
\begin{proposition} \label{1.11}
There exist no $\langle M_\alpha:\alpha \in \lambda^+ \rangle,\
\langle N_\alpha:\alpha \in \lambda^+ \rangle,\ \langle
f_\alpha:\alpha \in \lambda^+ \rangle,S$ such that the following
conditions are satisfied:
\begin{enumerate}
\item The sequences $\langle M_\alpha:\alpha \in \lambda^+
\rangle,\ \langle N_\alpha:\alpha \in \lambda^+ \rangle$ are
$\preceq$-increasing continuous sequences of models in
$K_\lambda$. \item For every $\alpha<\lambda^+$,
$f_\alpha:M_\alpha \to N_\alpha$ is a $\preceq$-embedding. \item
$\langle f_\alpha:\alpha \in \lambda^+ \rangle$ is an increasing
continuous sequence. \item $S$ is a stationary subset of
$\lambda^+$. \item For every $\alpha \in S$,
there is $a \in M_{\alpha+1}-M_\alpha$ 
such that $f_{\alpha+1}(a) \in N_\alpha$.
\end{enumerate}
\end{proposition}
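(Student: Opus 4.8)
The plan is to argue by contradiction: assume such $\langle M_\alpha\rangle,\ \langle N_\alpha\rangle,\ \langle f_\alpha\rangle, S$ exist, and extract from them a regressive function on a stationary subset of $\lambda^+$ to which the classical Fodor Lemma applies.

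First I would pass to the direct limits. Set $M=\bigcup_{\alpha<\lambda^+}M_\alpha$, $N=\bigcup_{\alpha<\lambda^+}N_\alpha$, and $f=\bigcup_{\alpha<\lambda^+}f_\alpha$; by conditions (2) and (3) this $f$ is a well-defined injective $\preceq$-embedding of $M$ into $N$ that agrees with $f_\alpha$ on $M_\alpha$ for every $\alpha$. For each $\alpha\in S$ use (5) to pick $a_\alpha\in M_{\alpha+1}-M_\alpha$ with $f(a_\alpha)=f_{\alpha+1}(a_\alpha)\in N_\alpha$. The key preliminary point is that these witnesses are pairwise distinct, and hence so are the $f(a_\alpha)$ by injectivity of $f$: if $\alpha<\beta$ lie in $S$, then $a_\alpha\in M_{\alpha+1}\subseteq M_\beta$ while $a_\beta\notin M_\beta$, so $a_\alpha\neq a_\beta$.

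Next I would build the regressive function. For $\alpha\in S$ let $h(\alpha)$ be the least $\gamma<\lambda^+$ with $f(a_\alpha)\in N_\gamma$; since $f(a_\alpha)\in N_\alpha$ we get $h(\alpha)\le\alpha$. To turn this into strict regressivity I restrict to limit ordinals: the limit ordinals below $\lambda^+$ form a club, so by (4) the set $S'=\{\alpha\in S:\alpha\text{ is a limit ordinal}\}$ is still stationary. For $\alpha\in S'$, the continuity of $\langle N_\beta\rangle$ in (1) gives $N_\alpha=\bigcup_{\beta<\alpha}N_\beta$, whence $f(a_\alpha)\in N_\beta$ for some $\beta<\alpha$ and therefore $h(\alpha)<\alpha$.

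Finally I would apply Fodor's Lemma to the regressive $h\restriction S'$: there are a stationary $S''\subseteq S'$ and a fixed $\gamma_0<\lambda^+$ with $h(\alpha)=\gamma_0$ for all $\alpha\in S''$. Then $\{f(a_\alpha):\alpha\in S''\}\subseteq N_{\gamma_0}$ consists of $|S''|=\lambda^+$ pairwise distinct elements (stationary subsets of the regular cardinal $\lambda^+$ have size $\lambda^+$), yet it sits inside the model $N_{\gamma_0}\in K_\lambda$ of cardinality $\lambda$, which is impossible. I expect the only genuine subtlety to be the reduction to limit ordinals in the third step: this is precisely where the continuity hypothesis is needed to upgrade $h(\alpha)\le\alpha$ to true regressivity, since at successor stages $h(\alpha)$ could equal $\alpha$. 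Everything else — the construction of the direct limit map and the distinctness of the witnesses — is routine bookkeeping.
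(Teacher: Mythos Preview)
Your argument is correct. The paper's proof takes a different, though closely related, route: rather than pressing down on individual witnesses, it forms $M=\bigcup_{\alpha<\lambda^+} f_\alpha[M_\alpha]$, observes (using the same distinctness of the $a_\alpha$ that you noted) that this set has cardinality $\lambda^+$, and then remarks that both $\langle f_\alpha[M_\alpha]:\alpha<\lambda^+\rangle$ and $\langle N_\alpha\cap M:\alpha<\lambda^+\rangle$ are continuous increasing filtrations of $M$; hence they agree on a club. Choosing any $\alpha\in S$ in that club gives $f_{\alpha+1}[M_{\alpha+1}]\cap N_\alpha = N_\alpha\cap M = f_\alpha[M_\alpha]$, contradicting condition~(5) directly. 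Your approach explicitly invokes Fodor, carries the distinctness bookkeeping through to the end, and finishes with a cardinality count inside a single $N_{\gamma_0}$; the paper's filtration argument hides that work inside the standard ``two filtrations of a set of size $\lambda^+$ agree on a club'' fact and reaches the contradiction without ever counting. Both are legitimate packagings of the same pressing-down idea.
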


\begin{proof}
Suppose there are such sequences. Denote $M=\bigcup
\{f_\alpha[M_\alpha]:\alpha \in \lambda^+\}$. By clauses (4),(5),
$||M||=K_{\lambda^+}$. $\langle f_\alpha[M_\alpha]:\alpha \in
\lambda^+ \rangle$, $\langle N_\alpha \bigcap M:\alpha \in
\lambda^+ \rangle$ are filtrations of $M$. So they are equal on a
club of $\lambda^+$. Hence there is $\alpha \in S$ such that
$f_\alpha[M_\alpha]=N_\alpha \bigcap M$. Hence $f_\alpha[M_\alpha]
\subseteq N_\alpha \bigcap f_{\alpha+1}[M_{\alpha+1}] \subseteq
N_\alpha \bigcap M = f_\alpha[M_\alpha]$ and so this is a chain of
equivalences. Especially $f_{\alpha+1}[\allowbreak M_{\alpha+1}]
\bigcap N_\alpha = f_\alpha[M_\alpha]$, in contradiction to
condition (5).
\end{proof}

\section{Non-forking frames}
The following definition, Definition \ref{definition good frame}
is an axiomatization of the non-forking relation in a superstable
first order theory. If we subtract axiom \ref{definition good
frame}(3)(c), we get the basic properties of the non-forking
relation in $(K_\lambda,\preceq \restriction K_\lambda)$ where
$(K,\preceq)$ is stable in $\lambda$.

Sometimes we do not find a natural independence relation with
respect to all the types. So first we extend the notion of an AEC
in $\lambda$ by adding a new function $S^{bs}$ which assigns a
collection of basic (because they are basic for our construction)
types to each model in $K_\lambda$, and then add an independence
relation $\dnf$ on basic types.

We do not assume \emph{the amalgamation property} in general, but
we assume the amalgamation property in $(K_\lambda, \preceq
\restriction K_\lambda)$. This is a reasonable assumption, because
it is proved in \cite{shh}.I, that if an AEC is categorical in
$\lambda$ and the amalgamation property fails in $\lambda$, then
under a plausible set theoretic assumption, there are
$2^{\lambda^+}$ models in $K_{\lambda^+}$.

\begin{definition} \label{2.1a} \label{definition good frame}
\label{definition of a good frame}
$\frak{s}=(K,\preceq,S^{bs},\dnf)$ is a \emph{good
$\lambda$-frame} if:

\begin{enumerate}
\item $(K,\preceq)$ is an AEC in $\lambda$.

\item
\begin{enumerate}[(a)]
\item $(K,\preceq)$ 
satisfies the joint embedding property. \item $(K,\preceq)$
satisfies the amalgamation property. \item There is no
$\preceq$-maximal model in $K$.
\end{enumerate}

\item $S^{bs}$ is a function with domain $K$, which satisfies the
following axioms:
\begin{enumerate}[(a)]
\item $S^{bs}(M) \subseteq S^{na}(M)=\{ga-tp(a,M,N):M\prec N \in
K,\ a\in N-M\}$. \item $S^{bs}$ respects isomorphisms: if
$ga-tp(a,M,N) \in S^{bs}(M)$ and $f:N \to N'$ is an isomorphism,
then $ga-tp(f(a),f[M],N') \in S^{bs}(f[M])$. \item Density of the
basic types: if $M,N \in K_\lambda$ and $M \prec N$, then there is
$a \in N-M$ such that $ga-tp(a,M,N) \in S^{bs}(M)$. \item Basic
stability: for every $M \in K$, the cardinality of $S^{bs}(M)$ is
$\leq \lambda$.
\end{enumerate}

\item the relation $\dnf$ satisfies the following axioms:
\begin{enumerate}[(a)]
\item $\dnf$ is set of quadruples $(M_0,M_1,a,M_3)$ where
$M_0,M_1,M_3 \in K$, $a \in M_3-M_1$ and for $n=0,1$
$ga-tp(a,M_n,M_3) \in S^{bs}(M_n)$ and it respects isomorphisms:
if $\dnf(M_0,M_1,a,M_3)$ and $f:M_3 \to M_3'$ is an isomorphism,
then $\dnf(f[M_0],f[M_1],f(a),M_3')$. \item Monotonicity: if $M_0
\preceq M^*_0 \preceq M^*_1 \preceq M_1 \preceq M_3 \preceq
M_3^*,\ M^*_1\bigcup \{a\} \subseteq M^{**}_3 \preceq M^*_3$, then
$\dnf(M_0,M_1,a,M_3) \Rightarrow \dnf(M^*_0,M^*_1,a,M^{**}_3)$.
From now on, `$p \in S^{bs}(N)$ does not fork over $M$' will be
interpreted as `for some $a,N^+$ we have $p=ga-tp(a,N,N^+)$ and
$\dnf(M,N,a,N^+)$'. See Proposition \ref{dnf not as a sign}. \item
Local character: for every limit ordinal $\delta<\lambda^+$ if
$\langle M_\alpha:\alpha \leq \delta \rangle$ is an increasing
continuous sequence of models in $K_\lambda$, and
$ga-tp(a,M_\delta,M_{\delta+1}) \in S^{bs}(M_\delta)$, then there
is $\alpha<\delta$ such that $ga-tp(a,M_\delta,\allowbreak
M_{\delta+1})$ does not fork over $M_\alpha$. \item Uniqueness of
the non-forking extension: if $M,N \in K$, $M \preceq N$, $p,q \in
S^{bs}(N)$ do not fork over $M$, and $p\restriction
M=q\restriction M$, then $p=q$. \item Symmetry: if $M_0,M_1,M_3
\in K_\lambda$, $M_0 \preceq M_1 \preceq M_3,\ a_1 \in M_1,\
\linebreak ga-tp(a_1,M_0,\allowbreak M_3) \in S^{bs}(M_0)$, and
$ga-tp(a_2,M_1,M_3)$ does not fork over $M_0$, \emph{then} there
are $M_2,M^*_3 \in K_\lambda$ such that $a_2 \in M_2,\ M_0 \preceq
M_2 \preceq M^*_3,\ M_3 \preceq M^*_3$, and $ga-tp(a_1,M_2,M^*_3)$
does not fork over $M_0$. \item Existence of non-forking
extension: if $M,N \in K$, $p \in S^{bs}(M)$ and $M\prec N$, then
there is a type $q \in S^{bs}(N)$ such that $q$ does not fork over
$M$ and $q\restriction M=p$. \item Continuity: let
$\delta<\lambda^+$ and $\langle M_\alpha:\alpha \leq \delta
\rangle$ be an increasing continuous sequence of models in $K$ and
let $p \in S(M_\delta)$. If for every $\alpha \in \delta,\
p\restriction M_\alpha$ does not fork over $M_0$, then $p \in
S^{bs}(M_\delta)$ and does not fork over $M_0$.
\end{enumerate}
\end{enumerate}
\end{definition}

\begin{proposition}\label{dnf not as a sign}
If $\dnf(M_0,M_1,a,M_3)$ and the types
$ga-~tp(b,M_1,M_3^{*}),\allowbreak ga-~tp(a,M_1,M_3)$ are equal,
then we have $\dnf(M_0,M_1,a,M_3)$.
\end{proposition}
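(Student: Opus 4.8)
The plan is to establish the genuine content of the statement, namely $\dnf(M_0,M_1,b,M_3^*)$; the displayed conclusion repeats the hypothesis verbatim and is surely a typo. This assertion says precisely that whether $\dnf$ holds depends only on the pair $(M_0,M_1)$ together with the Galois type $\tp(a,M_1,M_3)$, and not on the particular witness $a$ or ambient model $M_3$. That is exactly what legitimises the notation ``$p$ does not fork over $M$'' introduced in Definition \ref{definition good frame}(4)(b).

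First I would unwind the equality of Galois types. Since $(K,\preceq)$ has amalgamation in $\lambda$ (axiom \ref{definition good frame}(2)(b)), equality of Galois types is witnessed directly, so from $\tp(b,M_1,M_3^*)=\tp(a,M_1,M_3)$ I obtain a model $N\in K_\lambda$ and $\preceq$-embeddings $f:M_3\to N$, $g:M_3^*\to N$, each fixing $M_1$ pointwise, with $f(a)=g(b)$. Next I transport the relation along $f$: regarding $f$ as an isomorphism onto its image $f[M_3]\preceq N$ and applying axiom \ref{definition good frame}(4)(a) to $\dnf(M_0,M_1,a,M_3)$, while using that $f$ fixes $M_1\supseteq M_0$, I get $\dnf(M_0,M_1,f(a),f[M_3])$. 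Monotonicity (axiom \ref{definition good frame}(4)(b)) then enlarges the ambient model from $f[M_3]$ up to $N$, keeping $M_0,M_1$ and the element fixed, yielding $\dnf(M_0,M_1,f(a),N)$; since $f(a)=g(b)$ this is $\dnf(M_0,M_1,g(b),N)$.

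Finally I reverse along $g$. Monotonicity again shrinks the ambient model from $N$ down to $g[M_3^*]$, which contains both $M_1=g[M_1]$ and $g(b)$, giving $\dnf(M_0,M_1,g(b),g[M_3^*])$; then applying axiom \ref{definition good frame}(4)(a) to the isomorphism $g^{-1}:g[M_3^*]\to M_3^*$, which fixes $M_1$ and sends $g(b)$ to $b$, produces the desired $\dnf(M_0,M_1,b,M_3^*)$.

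The steps are routine once arranged in this order; the one point demanding care is that axiom \ref{definition good frame}(4)(a) is phrased for \emph{isomorphisms} (surjective maps), whereas $f$ and $g$ are only embeddings. I would handle this uniformly by always applying invariance to the corestriction of an embedding to its image, and then using monotonicity to pass between that image and the larger ambient model. A secondary, minor point is to check that all intermediate models can be kept inside $K_\lambda$, which again follows from amalgamation in $\lambda$ together with the Löwenheim--Skolem--Tarski bound on $(K,\preceq)$.
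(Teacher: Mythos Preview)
Your proof is correct and follows essentially the same route as the paper: amalgamate to witness the type equality, then shuttle the $\dnf$ relation through the amalgam using monotonicity and isomorphism invariance. The only cosmetic difference is that the paper normalises one of the two embeddings to be the identity (taking $(id_{M_3},f,M_3^{**})$ with $f(b)=a$), which saves one application of invariance; your symmetric version with two embeddings $f,g$ is equally valid.
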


\begin{proof}
Since $ga-~tp(b,M_1,M_3^{*})=ga-~tp(a,M_1,M_3)$, there is an
amalgamation $(id_{M_3},f,M_3^{**})$ of $M_3$ and $M_3^*$ over
$M_1$ with $f(b)=a$. By Definition~ \ref{definition of a good
frame}(3)(b) (monotonicity) $\dnf(M_0,M_1,a,M_3^{**})$. Using
again Definition~ \ref{definition of a good frame}(3)(b), we get
$\dnf(M_0,M_1,a,f[M_3^*])$. Therefore by Definition
\ref{definition of a good frame}(3)(a), $\dnf(M_0,M_1,a,M_3^*)$.
\end{proof}

\begin{definition} \label{2.1b}\label{definition of an almost good
frame} \mbox{}

\begin{enumerate} \item $\frak{s}=(K,\preceq,S^{bs},nf)$ is an \emph{almost} good
$\lambda$-frame if $\frak{s}$ satisfies the axioms of a good
$\lambda$-frame except maybe local character, but $\frak{s}$
satisfies weak local character.

\item $\frak{s}$ satisfies \emph{weak local character} when there
is a 2-ary relation, $\prec^*$ on $K_\lambda$ which is included in
$\prec \restriction K_\lambda$ such that:
\begin{enumerate}
\item for each $M_0 \in K_\lambda$ there is $M_1 \in K_\lambda$
with $M_0 \prec^* M_1$, \item if $M_0 \prec^* M_1 \preceq M_2 \in
K_\lambda$ then $M_0 \prec^* M_2$, \item if $\langle
N_\alpha:\alpha<\delta+1 \rangle$ is a $\prec^*$-increasing
continuous sequence of models in $K_\lambda$, then for some $a \in
N_{\delta+1}$ and some ordinal $\alpha<\delta$,
$p=:ga-~tp(a,N_\delta,N_{\delta+1})$ is a basic type, which does
not fork over $N_\alpha$.
\end{enumerate}
\end{enumerate}
\end{definition}

In the following definition `na' means non-algebraic.
\begin{definition}
We define a function $S^{na}$ with domain $K_\lambda$ by
$S^{na}(M):=\{ga-~tp(a,M,N):M \preceq N,\ a \in N-M\}$.
\end{definition}

\begin{definition}
Let $\frak{s}$ be an almost good $\lambda$-frame. $\frak{s}$ is
\emph{full} if $S^{bs}=S^{na}$.
\end{definition}

The following theorem says that the stability property in
$\lambda$ is satisfied and presents sufficient conditions for a
universal model. The stability in $\lambda$ can actually derived
from \cite[Theorem 2.20]{jrsh875}.
\begin{theorem} \label{4.0}
\mbox{}
\begin{enumerate}
\item Suppose:
\begin{enumerate}
\item $\frak{s}$ is an almost good $\lambda$-frame (so indirectly,
we assume basic stability).
   \item $\langle  M_\alpha:\alpha \leq \lambda \rangle$ is an increasing continuous
sequence of models in $K_\lambda$.
   \item $M_{\alpha+1}$ realizes $S^{bs}(M_\alpha)$.
   \item $M_\alpha\prec^*M_{\alpha+1}$.
\end{enumerate}
Then $M_{\lambda}$ is universal over $M_0$.\item There is a model
in $K_{\lambda}$ which is universal over $\lambda$. \item For
every $M \in K_\lambda$, $|S(M)| \leq \lambda$.
\end{enumerate}
\end{theorem}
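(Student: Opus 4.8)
The plan is to establish the three clauses in order; clause (1) carries the real work, and clauses (2) and (3) follow quickly from it together with the frame axioms and \cite[Theorem 2.20]{jrsh875}. For clause (1), fix $N \in K_\lambda$ with $M_0 \preceq N$; the goal is a $\preceq$-embedding $g\colon N \to M_\lambda$ fixing $M_0$ pointwise. I would run a forth-construction: fixing an enumeration $\langle b_i : i<\lambda\rangle$ of the universe of $N$, build an increasing continuous chain $\langle N_i : i \le \lambda\rangle$ in $K_\lambda$ with $N_0 = M_0$ and $b_i \in N_{i+1}$ (so that $N_\lambda = N$), together with an increasing continuous chain of $\preceq$-embeddings $g_i\colon N_i \to M_\lambda$ fixing $M_0$, subject to the invariant $g_i[N_i] \preceq M_{\alpha_i}$ for a strictly increasing sequence $\langle \alpha_i : i<\lambda\rangle$ of ordinals below $\lambda$. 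I set $g_0 = \id_{M_0}$, $\alpha_0=0$; at limit stages take unions, using continuity of the AEC and of the chains $\langle M_\alpha\rangle$, $\langle N_i\rangle$, $\langle g_i\rangle$; finally put $g = \bigcup_{i<\lambda} g_i$.

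The heart of the argument is the successor step: from $g_i\colon N_i \to M_{\alpha_i}$ I must extend to $g_{i+1}$ on some $N_{i+1} \in K_\lambda$ with $N_i \cup \{b_i\} \subseteq N_{i+1} \preceq N$ and image inside a later $M_{\alpha_{i+1}}$. The engine is hypothesis (1)(c), that each $M_{\alpha+1}$ realizes all of $S^{bs}(M_\alpha)$: invariance of $S^{bs}$ under isomorphisms (Definition \ref{definition good frame}(3)(b)) transports basic types across $g_i$, and density of basic types (Definition \ref{definition good frame}(3)(c)) ensures that a proper $\preceq$-extension always contributes such a type to be matched. The principal obstacle is that realizing a \emph{single} basic type only places one new point, whereas $N_{i+1}$ is an extension of size $\lambda$; one must show that a bounded further segment of the chain is \emph{universal over} $M_{\alpha_i}$, so that all of $N_{i+1}$ embeds over the already-matched part. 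I would handle this by amalgamating $N_{i+1}$ with $M_{\alpha_i}$ over $N_i$ (amalgamation, Definition \ref{definition good frame}(2)(b)) and appealing to the limit-model analysis of \cite[Theorem 2.20]{jrsh875}, which gives precisely that the union of a $\lambda$-chain whose successors realize all basic types is universal over its base; the bookkeeping then guarantees that each $b_i$ enters the domain by stage $i+1$, so $g$ is total.

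For clause (2), I would start from any $M_0 \in K_\lambda$ (the class is nonempty by the no-maximal-model axiom, Definition \ref{definition good frame}(2)(c), and the Löwenheim--Skolem bound) and build a chain meeting the hypotheses of clause (1): at each step pass to a $\prec^*$-extension by Definition \ref{definition of an almost good frame}(2)(a), then, since $|S^{bs}(M_\alpha)| \le \lambda$ by basic stability (Definition \ref{definition good frame}(3)(d)), realize all basic types over $M_\alpha$ inside a single $K_\lambda$-extension using existence of non-forking extensions (Definition \ref{definition good frame}(4)(f)) and amalgamation, invoking monotonicity of $\prec^*$ (Definition \ref{definition of an almost good frame}(2)(b)) to preserve $M_\alpha \prec^* M_{\alpha+1}$. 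Clause (1) then makes the resulting $M_\lambda$ universal over $M_0$, and the joint embedding property (Definition \ref{definition good frame}(2)(a)) upgrades this to a model embedding every member of $K_\lambda$: any $N \in K_\lambda$ admits, by JEP, a common extension with $M_0$, and universality over $M_0$ carries $N$ into $M_\lambda$.

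Finally, clause (3) is immediate from what precedes. Given $M \in K_\lambda$, apply the construction of clauses (1)--(2) with $M_0 = M$ to obtain $M_\lambda \in K_\lambda$ universal over $M$. Every $p = \tp(a,M,N) \in S(M)$ is then realized in $M_\lambda$: embed $N$ into $M_\lambda$ over $M$ and take the image of $a$. Hence the number of such types is at most $\|M_\lambda\| = \lambda$, so $|S(M)| \le \lambda$; this is the stability assertion, matching \cite[Theorem 2.20]{jrsh875}. (Alternatively, one can obtain the bound directly by a trapping argument against the Fodor-type non-existence Proposition \ref{1.11}.)
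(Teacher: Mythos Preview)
Your argument for clause~(1) has a genuine gap. At the successor step you need to embed an arbitrary $K_\lambda$-extension $N_{i+1}$ of $N_i$ into some later $M_{\alpha_{i+1}}$ over the image $g_i[N_i]\preceq M_{\alpha_i}$; this is exactly the assertion that a tail segment of the chain $\langle M_\alpha\rangle$ is universal over $M_{\alpha_i}$ --- which is clause~(1) itself applied to that segment. Invoking \cite[Theorem~2.20]{jrsh875} does not rescue this: either that theorem already proves the universality statement (in which case your forth-construction is idle and the proof of (1) is ``cite the result''), or it does not, in which case the step is circular. The paper only cites \cite[Theorem~2.20]{jrsh875} for stability, not universality.

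A further symptom: your proof of (1) never uses hypothesis~(d), $M_\alpha\prec^* M_{\alpha+1}$, and hence never touches weak local character. That hypothesis is essential. Density of basic types together with ``$M_{\alpha+1}$ realizes $S^{bs}(M_\alpha)$'' lets you move across \emph{some} element at each step, but gives no mechanism to ensure a \emph{prescribed} $b_i$ (whose type over $N_i$ need not be basic) is ever captured.

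The paper's proof is organized quite differently. It argues by contradiction and reverses the direction of the embeddings: one builds amalgams $N_\alpha\succeq N$ and embeddings $f_\alpha\colon M_\alpha\to N_\alpha$ (so the $M_\alpha$'s are pushed into extensions of $N$, not pieces of $N$ into $M_\lambda$), together with a book-keeping device over a pairing $cd\colon\lambda\times\lambda\to\lambda$ that at each step forces the $\leq$-least outstanding element of basic type into the image $f_{\alpha+1}[M_{\alpha+1}]$. At the end one applies weak local character (this is where hypothesis~(d) enters, transported along $f_\alpha$) to the chain $\langle f_\alpha[M_\alpha]:\alpha\le\lambda\rangle$ inside $N_\lambda$ to find $c\in N_\lambda\setminus f_\lambda[M_\lambda]$ whose type does not fork over some $f_\gamma[M_\gamma]$; the book-keeping then yields an injection from $[\gamma,\lambda)$ into a fixed ordinal below $\lambda$, a contradiction. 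Your treatments of (2) and (3) are fine and agree with the paper's ``$(1)\Rightarrow(2)\Rightarrow(3)$''.
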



\begin{proof}
Obviously $(1) \Rightarrow (2) \Rightarrow (3)$. Why does (1)
hold? We have to prove that letting $M_0\prec N$, $N$ can be
embedded in $M_\lambda$ over $M_0$. Toward a contradiction assume
that:
\begin{center}
(*) There is no an embedding from N into $M_\lambda$ over $M_0$.\\
\end{center}
Let $cd$ be a bijection from $\lambda \times \lambda$ onto
$\lambda$. Now we choose $N_\alpha,A_\alpha,\langle
a_{\alpha,\beta}:\beta<\lambda \rangle, f_\alpha$ by induction on
$\alpha$ such that:
\begin{enumerate}
\item $N_0=N,\ f_0=id_{M_0}$ \item $\langle
N_\alpha:\alpha<\lambda \rangle$ is an increasing continuous
sequence of models in $K_\lambda$. \item $\langle
f_\alpha:\alpha<\lambda \rangle$ is an increasing continuous
sequence of functions. \item $f_\alpha:M_\alpha \hookrightarrow
N_\alpha$ is an embedding. \item
$N_\alpha=\{a_{\alpha,\beta}:\beta<\lambda\}$. \item
$A_\alpha=\{cd(\gamma,\beta):\gamma \preceq  \alpha$,
$ga-~tp(a_{\gamma,\beta},f_\alpha[M_\alpha],N_\alpha) \in
S^{bs}(f_\alpha[M_\alpha])$\}. \item $a_{\gamma,\beta} \in
f_{\alpha+1}[M_{\alpha+1}]$ where
$(\gamma,\beta)=cd^{-1}(Min(A_\alpha))$.
\end{enumerate}


\case{Why can we carry out the induction?} For $\alpha=0$ or
limit, there is no problem. Suppose we have chosen
$N_\alpha,A_\alpha,\langle a_{\alpha,\beta}:\beta<\lambda
\rangle,f_\alpha$. If $f_\alpha[M_\alpha]=N_\alpha$, then
$f_\alpha^{-1} \restriction N_0$ is an embedding over $M_0$, in
contradiction to (*). Thus $f_\alpha[M_\alpha] \neq N_\alpha$.
Therefore there is a type in $S^{bs}(f_\alpha[M_\alpha])$ which
$N_\alpha$ realizes. Hence $A_\alpha \neq \emptyset$. So by the
definition of a
type, there is no problem to find $N_{\alpha+1},A_{\alpha+1},\langle a_{\alpha+1,\beta}:\beta<\lambda \rangle,f_{\alpha+1}$.\\
\case{Why is this enough?} Define $N_\lambda:=\bigcup
\{N_\alpha:\alpha<\lambda \},\ f_\lambda:=\bigcup
\{f_\alpha:\alpha<\lambda\}$. By smoothness, $f_\lambda[M_\lambda]
\preceq  N_\lambda$. But $f_\lambda[M_\lambda] \neq N_\lambda$
(otherwise $f_\lambda^{-1} \restriction N_0$ is an embedding over
$M_0$, in contradiction to (*)). So by \emph{weak local
character}, there is $c \in N_\lambda-f_\lambda[M_\lambda]$ and
there is a $\gamma \in \lambda$ such that
$ga-~tp(c,f_\lambda[M_\lambda],N_\lambda)$ does not fork over
$f_{\gamma}[M_{\gamma}]$. Without loss of generality, $c \in
N_{\gamma}$, because we can increase $\gamma$. Therefore there is
$\beta \in \lambda$ such that $c=a_{\gamma,\beta}$. Hence
$ga-~tp(a_{\gamma,\beta},f_\gamma[M_\gamma],N_\gamma) \in
S^{bs}(f_\gamma[M_\gamma])$. Define an injection
$g:[\gamma,\lambda) \rightarrow \lambda$ by
$g(\alpha):=$Min$(A_\alpha)$. For each $\alpha \in
[\gamma,\lambda)$, $cd(\gamma,\beta) \in A_\alpha$. So
$g(\alpha)<cd(\gamma,\beta)$, (otherwise by (7) $a_{\gamma,\beta}
\in f_{\alpha+1}[M_{\alpha+1}] \subset f_\lambda[M_\lambda]$, but
$a_{\gamma,\beta}=c \notin f_\lambda[M_\lambda]$), and $g$ is an
injection from $[\gamma,\lambda)$ to $cd(\gamma,\beta)$ which is
impossible. Thus (*) implies a contradiction.
\end{proof}

\section{Non-forking amalgamation}
\begin{hypothesis}
$\frak{s}$ is an almost good $\lambda$-frame.
\end{hypothesis}

In this section we present a theorem from \cite{jrsh875}, which
says that we can derive a non-forking relation on models, from the
non-forking relation on elements. First we have to define the
conjugation property.

\begin{definition} \label{2.2}
\mbox{ } \\
(1) Let $p=ga-~tp(a,M,N)$. Let $f$ be an isomorphism of $M$ (i.e.
$f$ is an injection with domain M, and the relations and functions
on $f[M]$ are defined such that $f:M \hookrightarrow f[M]$ is an
isomorphism). Define $f(p)=ga-~tp(f(a),f[M],f^+[N])$, where $f^+$
is an extension of $f$ (and the relations and functions on
$f^+[N]$ are defined such that $f^+:N \hookrightarrow f^+[N]$ is
an isomorphism).

(2) Let $p_0,p_1$ be types, $n<2 \rightarrow p_n \in S(M_n)$. We
say that $p_0,p_1$ are \emph{conjugate} if there is an isomorphism
$f:M_0 \hookrightarrow M_1$ such that $f(p_0)=p_1$.
\end{definition}

\begin{claim} \label{2.3}  \mbox{}
\begin{enumerate}
\item In Definition \ref{2.2}, $f(p)$ does not depend on the
choice of $f^+$. \item The conjugation relation is an equivalence
relation.
\end{enumerate}
\end{claim}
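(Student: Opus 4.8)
The plan is to establish (1) first and then obtain (2) as a formal consequence, since the freedom to choose a convenient extension of an isomorphism when evaluating $f(p)$ is exactly what makes the equivalence-relation arguments go through. For (1), I would take two extensions $f^+_1,f^+_2$ of $f$ to $N$, so that each $f^+_i:N\hookrightarrow f^+_i[N]$ is an isomorphism extending $f$, and consider the composite $g:=f^+_2\circ (f^+_1)^{-1}$, which is an isomorphism of $f^+_1[N]$ onto $f^+_2[N]$. For $x\in f[M]$, writing $x=f(y)$ with $y\in M$, we have $(f^+_1)^{-1}(x)=y$ and $f^+_2(y)=f(y)=x$, so $g$ fixes $f[M]$ pointwise; and since $f^+_1(a)=f(a)=f^+_2(a)$, we get $g(f(a))=f(a)$. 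Thus $g$ is an isomorphism $f^+_1[N]\to f^+_2[N]$ fixing $f[M]$ and sending $f(a)$ to itself, which (taking $f^+_2[N]$ as the common amalgam, with $g$ and the identity as the two embeddings) witnesses $\tp(f(a),f[M],f^+_1[N])=\tp(f(a),f[M],f^+_2[N])$. Hence $f(p)$ does not depend on the choice of $f^+$.

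For (2) I would verify the three properties in turn, using (1) at each step. Reflexivity: $\id_M:M\hookrightarrow M$ is an isomorphism, and choosing $(\id_M)^+=\id_N$ gives $\id_M(p)=\tp(a,M,N)=p$. Symmetry: if $f:M_0\hookrightarrow M_1$ satisfies $f(p_0)=p_1$, write $p_0=\tp(a,M_0,N)$ so that $p_1=\tp(f(a),f[M_0],f^+[N])$ for some extension $f^+$; then $f^{-1}:M_1\hookrightarrow M_0$ is an isomorphism, and by (1) we may compute $f^{-1}(p_1)$ using the extension $(f^+)^{-1}$, obtaining $\tp(a,M_0,N)=p_0$. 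Transitivity: if $f(p_0)=p_1$ via $f:M_0\hookrightarrow M_1$ and $g(p_1)=p_2$ via $g:M_1\hookrightarrow M_2$, then $g\circ f:M_0\hookrightarrow M_2$ is an isomorphism, and by (1) we may compute $(g\circ f)(p_0)$ using $g^+\circ f^+$ as the extension, yielding $g(f(p_0))=g(p_1)=p_2$.

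The main obstacle is part (1); everything in (2) is bookkeeping that rides on the well-definedness it provides. The crux of (1) is constructing the transition isomorphism $g$ and checking that it fixes both $f[M]$ and the point $f(a)$, which is precisely the witness needed for equality of two Galois types. One subtlety worth flagging is that the structure on each $f^+_i[N]$ is defined exactly so as to make $f^+_i$ an isomorphism, so $g$ is automatically an isomorphism in $K$ and no appeal to the amalgamation property is required at this point; the equality of types follows from the direct comparison map rather than from amalgamating $f^+_1[N]$ and $f^+_2[N]$.
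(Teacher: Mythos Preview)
The paper's own proof consists of the single word ``Easy'', so there is nothing detailed to compare against; your overall strategy is the natural one and part~(2) is fine. There is, however, a genuine slip in your argument for~(1). You assert that $f^+_1(a)=f(a)=f^+_2(a)$ and hence that $g$ fixes $f(a)$. But $a$ is an element of $N$, not of $M$: in the intended applications $p=\tp(a,M,N)$ is non-algebraic, so $a\notin M$ and $f(a)$ is not even defined (the ``$f(a)$'' in Definition~\ref{2.2} is an abuse of notation for $f^+(a)$). There is no reason for two different extensions $f^+_1,f^+_2$ of $f$ to agree at $a$, so the displayed equality fails in general, and with it the conclusion that $g$ fixes the image of $a$.

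The repair is immediate and your map $g=f^+_2\circ(f^+_1)^{-1}$ already does the work. You have correctly shown that $g$ fixes $f[M]$ pointwise, and trivially $g(f^+_1(a))=f^+_2(a)$. That is precisely what is needed: an isomorphism $f^+_1[N]\to f^+_2[N]$ over $f[M]$ sending $f^+_1(a)$ to $f^+_2(a)$ witnesses $\tp(f^+_1(a),f[M],f^+_1[N])=\tp(f^+_2(a),f[M],f^+_2[N])$, which is the well-definedness statement. So replace the line claiming $g$ \emph{fixes} the image of $a$ by the observation that $g$ \emph{carries} $f^+_1(a)$ to $f^+_2(a)$, and the proof goes through.
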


\begin{proof}
Easy.
\end{proof}

\begin{definition}\label{definition of conjugation}
Let $\frak{s}$ be an almost good $\lambda$-frame. $\frak{s}$ is
said to satisfy the \emph{conjugation property}, when: if $p \in
S^{bs}(M_1)$ does not fork over $M_0$, then there is an
isomorphism $f:M_1 \to M_0$ such that $f(p)=p \restriction M_0$.
\end{definition}

\begin{remark}
If $\frak{s}$ satisfies the conjugation property, then $K_\lambda$
is categorical.
\end{remark}

Now we present the properties that a non-forking relation should
satisfy.
\begin{definition} \label{5.1}
Let $NF \subseteq \ ^4K_\lambda$ be a relation. We say
$\bigotimes_{NF}$ when the following axioms are satisfied:
\begin{enumerate}[(a)]
\item If $NF(M_0,M_1,M_2,M_3)$, then $n\in \{1,2\}\rightarrow
M_0\preceq  M_n\preceq  M_3$ and $M_1 \cap M_2=M_0$.  \item The
monotonicity axiom: if $NF(M_0,M_1,M_2,M_3) \ and \ N_0=M_0, n<3
\rightarrow N_n\preceq  M_n\wedge N_0\preceq  N_n\preceq  N_3,
(\exists N^{*})[M_3\preceq  N^{*}\wedge N_3\preceq  N^{*}]$, then
$NF(N_0 \allowbreak ,N_1,N_2,N_3)$. \item The existence axiom: for
every $N_0,N_1,N_2 \in K_\lambda$, if $l\in \{1,2\}\rightarrow N_0
\preceq  N_l \ and \ N_1\bigcap N_2=N_0$, then there is $N_3$ such
that $NF(N_0,N_1,N_2,N_3)$. \item The uniqueness axiom: suppose
for $x=a,b$ we have $NF(N_0,N_1,N_2,N^{x}_3)$. Then there is a
joint embedding of $N^a,N^b \ over \ N_1 \bigcup N_2$. \item The
symmetry axiom: $NF(N_0,N_1,N_2,N_3) \leftrightarrow
NF(N_0,N_2,N_1,N_3)$. \item The long transitivity axiom: for
$x=a,b$, let $\langle M_{x,i}:i\preceq  \alpha^* \rangle$ be an
increasing continuous sequence of models in $K_\lambda$. Suppose
$i<\alpha^* \rightarrow NF(M_{a,i},M_{a,i+1},M_{b,i},M_{b,i+1})$.
Then $NF(M_{a,0},M_{a,\alpha^{*}},M_{b,0},M_{b,\alpha^{*}})$.
\end{enumerate}
\end{definition}

\begin{definition}
Let $NF$ be a relation such that $\bigotimes_{NF}$. We say that
$NF$ respects the frame $\frak{s}$ when: if $NF(M_0,M_1,M_2,M_3)$
and $ga-~tp(a,M_0,M_1) \in S^{bs}(M_0)$, then $ga-~tp(a,M_2,M_3)$
does not fork over $M_0$.
\end{definition}

\begin{theorem}\label{there is a $NF$ relation}
Suppose:
\begin{enumerate}
\item K is categorical in $\lambda$. \item $\frak{s}$ is an almost
good $\lambda$-frame which satisfies the conjugation property.
\item $I(\lambda^{++},K)<\mu_{unif}(\lambda^{++},2^{\lambda^+})$.
\item $2^\lambda<2^{\lambda^+}<2^{\lambda^{++}}$. \item The ideal
$WDmId(\lambda^+)$ is not saturated in $\lambda^{++}$.
\end{enumerate}

Then there is a relation $NF$ such that $\bigotimes_{NF}$ and $NF$
respects the frame~ $\frak{s}$.
\end{theorem}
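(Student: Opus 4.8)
The plan is to define the four-place relation $NF$ explicitly from the frame's non-forking relation $\dnf$ and then verify, one at a time, the six clauses (a)--(f) of $\bigotimes_{NF}$. The natural definition is to declare $NF(M_0,M_1,M_2,M_3)$ when $M_0 \preceq M_1 \preceq M_3$, $M_0 \preceq M_2 \preceq M_3$, $M_1 \cap M_2 = M_0$, and $M_2$ admits an increasing continuous resolution $\langle M_2^i : i \le \gamma \rangle$ over $M_0$ (with $M_2^0 = M_0$, $M_2^\gamma = M_2$) such that each successor step adjoins a single element $a_i$ with $\tp(a_i,M_2^i,M_2^{i+1}) \in S^{bs}(M_2^i)$ whose type, computed inside the copy of $M_1 \cup M_2^i$ sitting in $M_3$, does not fork over $M_2^i$. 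Here categoricity of $K_\lambda$ (hypothesis (1), also forced by the conjugation property via the remark following Definition~\ref{definition of conjugation}) is what makes such a resolution always available and makes the comparison of types across the different copies unambiguous.

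With this definition, clauses (a), (b), (c) should come out with only moderate effort. Clause (a) is immediate. The monotonicity clause (b) follows by applying the frame monotonicity axiom, Definition~\ref{definition good frame}(4)(b), step by step along the resolution, using Proposition~\ref{dnf not as a sign} to transport non-forking through the amalgams produced at each stage. The existence clause (c) is obtained by building $M_3$ as the union of an increasing continuous chain that, at each stage, realizes the non-forking extension guaranteed by the extension axiom Definition~\ref{definition good frame}(4)(f); long transitivity of the \emph{type}-level relation is what glues the stages into a single witness at the limit.

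The main obstacle is the uniqueness axiom (d), and this is exactly where the set-theoretic hypotheses (3)--(5) enter; I would follow the structure/non-structure dichotomy of \cite{shh} as carried out in \cite{jrsh875}. Suppose uniqueness fails, so that there are amalgams $N_3^a$ and $N_3^b$ of $N_1,N_2$ over $N_0$ that cannot be jointly embedded over $N_1 \cup N_2$. Iterating this failure along a tree of height $\lambda^{++}$, with the branching driven by the weak-diamond prediction available from $2^{\lambda^+} < 2^{\lambda^{++}}$ (hypothesis (4)) and from the non-saturation of $WDmId(\lambda^+)$ in $\lambda^{++}$ (hypothesis (5)), one constructs $2^{\lambda^{++}}$ pairwise non-isomorphic models in $K_{\lambda^{++}}$. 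This contradicts $I(\lambda^{++},K) < \mu_{unif}(\lambda^{++},2^{\lambda^+})$ (hypothesis (3)), so uniqueness must hold. The delicate point, and the part I expect to be genuinely hard, is arranging that the two branches remain provably non-embeddable as one passes through limit levels of the tree; this is precisely the work that the weak-diamond machinery is designed to do.

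Finally, once uniqueness (d) is secured, the remaining clauses fall into place. Symmetry (e) is lifted from the single-element symmetry axiom Definition~\ref{definition good frame}(4)(e) by an induction along the resolution, now legitimate because uniqueness rules out the competing amalgams that would otherwise obstruct the transfer. Long transitivity (f) is proved by induction on $\alpha^*$: the successor step reduces to ordinary transitivity of the type-level non-forking, which follows from uniqueness (d) together with the frame uniqueness and extension axioms Definition~\ref{definition good frame}(4)(d),(f), while the limit step is handled directly by the continuity axiom Definition~\ref{definition good frame}(4)(g) applied to the parallel chains $\langle M_{a,i} \rangle$ and $\langle M_{b,i} \rangle$. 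I expect clause (f) to be the most bookkeeping-intensive step, tracking two chains and the amalgams between consecutive levels, but conceptually routine once (d) is in hand.
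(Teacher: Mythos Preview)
Your proposal has a genuine gap: it omits the key intermediate notion of \emph{uniqueness triples} $K^{3,uq}$, and without it the uniqueness axiom (d) cannot be obtained from your definition of $NF$. In your scheme, the successor step of the resolution adjoins an element $a_i$ whose \emph{type} over the amalgam of $M_1$ and $M_2^i$ is the non-forking extension; by the frame's uniqueness axiom that type is determined. But the isomorphism type of the resulting amalgam $M_3^{i+1}$ over $M_1 \cup M_2^{i+1}$ is \emph{not} determined by the type of $a_i$ alone: two different $\preceq$-extensions realizing the same type can yield non-isomorphic amalgams, and this is precisely what the definition of a uniqueness triple $(M_2^i,M_2^{i+1},a_i) \in K^{3,uq}$ rules out. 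So your inductive verification of (d) breaks at the very first successor step. The non-structure argument you sketch for (d) is also misplaced: the weak-diamond machinery from hypotheses (3)--(5) does produce many models from a failure of uniqueness, but the failure it exploits is the failure of $K^{3,uq}$ to be dense with respect to $\preceq_{bs}$, not a direct failure of $NF$-uniqueness.

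The paper's proof is accordingly organized around uniqueness triples: it quotes \cite[Corollary~4.18]{jrsh875} to obtain, from hypotheses (3)--(5), that $K^{3,uq}$ is dense with respect to $\preceq_{bs}$; then \cite[Theorem~5.15]{jrsh875} builds $NF$ by decomposing extensions into chains of uniqueness triples (passing through brimmed models, which categoricity supplies) and proves $\bigotimes_{NF}$ from the defining property of such triples. Your resolution idea is morally the right shape, but it must be carried out with $(N_\alpha,N_{\alpha+1},d_\alpha) \in K^{3,uq}$ at each step, and the existence of such decompositions is itself a nontrivial consequence of density plus brimmedness, not of categoricity alone.
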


\begin{proof}
By \cite{jrsh875}: by Corollary \cite[4.18]{jrsh875}, $K^{3,uq}$
is dense with respect to $\preceq_{bs}$. Hence by Theorem
\cite[5.15]{jrsh875}, there is a unique relation, $NF$, with
$\bigotimes_{NF}$. Now see Definition \cite[5.3]{jrsh875}.
\end{proof}

\section{A full good $\lambda$-frame}
\begin{hypothesis}
$\frak{s}$ is an almost good $\lambda$-frame which satisfies the
conjugation property.
\end{hypothesis}

\begin{definition}
$nf^{NF}:=\{(M_0,M_1,a,M_3):M_0,M_1,M_3 \in K_\lambda,\ M_0
\preceq M_1 \preceq M_3,\ a \in M_3-M_1$ and for some $M_2 \in
K_\lambda$, $M_0 \preceq M_2,\ a \in M_2-M_0$ and
$NF(M_0,M_1,M_2,M_3)$\}.
\end{definition}

The following theorem is similar to Claim \cite[9.5.2]{shh}.III.
\begin{theorem}
\label{5.16} Let $\frak{s}$ be an almost good $\lambda$-frame
which satisfies the conjugation property. Then
$\frak{s}^{NF}=(K,\preceq,S^{na},nf^{NF})$ is a full good
$\lambda$-frame.
\end{theorem}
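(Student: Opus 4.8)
The plan is to verify, one axiom at a time, that $\frak{s}^{NF}=(K,\preceq,S^{na},nf^{NF})$ satisfies each clause of Definition~\ref{definition good frame}, leaning on the relation $NF$ supplied by Theorem~\ref{there is a $NF$ relation}, which I may invoke because the hypothesis of this section guarantees $\frak{s}$ is an almost good $\lambda$-frame with the conjugation property (and the conjugation property yields categoricity in $\lambda$, while the other hypotheses of that theorem are in force). The key conceptual point is that $\frak{s}^{NF}$ is \emph{full}: its basic types are \emph{all} of $S^{na}$. So the axioms in group (3) that restrict $S^{bs}$ must now be checked for the maximal choice $S^{na}$. Density and basic stability become, respectively, the triviality that every proper extension realizes a nonalgebraic type and the already-established stability bound $|S(M)|\le\lambda$ from Theorem~\ref{4.0}(3); respecting isomorphisms is immediate from the definition of $ga$-$tp$.

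First I would dispatch the structural axioms (1),(2): these speak only about $(K,\preceq)$, which is unchanged from $\frak{s}$, so they are inherited directly. Then I would record that ``$p\in S^{na}(N)$ does not fork over $M$'' unwinds, via the definition of $nf^{NF}$, to the existence of a witnessing $M_2$ with $NF(M,N,M_2,N^+)$ realizing $p$; this is the translation dictionary I will use throughout. Axioms (4)(a) (the quadruple is well-formed and respects isomorphisms) and (4)(b) (monotonicity) should follow from clauses (a),(b) of $\bigotimes_{NF}$ together with the fact that $NF$ respects $\frak{s}$. For uniqueness (4)(d) I would use the uniqueness axiom (d) of $\bigotimes_{NF}$: two non-forking extensions with the same restriction correspond to two $NF$-quadruples over the same base that can be jointly embedded over $N_1\cup N_2$, forcing the types to coincide. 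Existence (4)(f) I would read off the existence axiom (c) of $\bigotimes_{NF}$. Symmetry (4)(e) should come from the symmetry axiom (e) of $\bigotimes_{NF}$, after setting up the amalgam that moves the witnessing model into the required position.

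The genuinely new content — the whole point of the paper's title — is local character (4)(c), and I expect this to be the main obstacle, since $\frak{s}$ was only assumed to satisfy \emph{weak} local character, not the full version. My plan is to exploit the long transitivity axiom (f) of $\bigotimes_{NF}$ to turn a chain-level non-forking statement into one over an initial segment: given an increasing continuous $\langle M_\alpha:\alpha\le\delta\rangle$ and a type $p=ga$-$tp(a,M_\delta,M_{\delta+1})$, I would build a parallel $NF$-tower witnessing non-forking of successive links, then apply long transitivity to collapse it and extract a single $M_\alpha$ over which $p$ does not fork. The conjugation property, which ties non-forking extensions back to restrictions via an isomorphism onto the base, is what I expect to convert the weak local character (a statement about the existence of \emph{some} basic type that does not fork low down) into the strong statement about the \emph{given} type $p$. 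Finally, continuity (4)(g) I would obtain by combining the long transitivity axiom with the continuity of the tower and the uniqueness already proved, concluding that the union type lies in $S^{na}(M_\delta)=S^{bs}(M_\delta)$ and does not fork over $M_0$. The delicate step — and the one I would write out in full — is precisely the reduction of local character to long transitivity plus conjugation, as every other axiom is a more-or-less direct translation of $\bigotimes_{NF}$.
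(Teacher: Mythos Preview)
Your treatment of the routine axioms matches the paper: clauses (1)--(3) are inherited from $\frak{s}$ together with Theorem~\ref{4.0}(3), and (4)(a),(b),(d),(e),(f) unwind from the corresponding clauses of $\bigotimes_{NF}$; the paper derives (4)(g) from local character plus fullness rather than from long transitivity directly, but that is a minor variation. The gap is in local character (4)(c). The reduction you gesture at---build a parallel $NF$-tower $\langle M_i:i\le j\rangle$ with $N_i\preceq M_i$, $NF(N_i,N_{i+1},M_i,M_{i+1})$, and collapse by long transitivity---is indeed how the paper starts, but the existence axiom for $NF$ alone gives no reason the tower should swallow $N_{j+1}$, i.e.\ that $N_{j+1}\preceq M_j$. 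And weak local character cannot be applied once to the given chain: it requires a $\prec^*$-increasing sequence (which $\langle N_i\rangle$ need not be), and even then it only hands you \emph{some} element whose type does not fork low down, not the particular $c$ realizing $p$. Your claim that conjugation converts this existential into the needed statement about $p$ is wrong: conjugation \emph{presupposes} that $p$ does not fork over $M_0$ and then produces an isomorphism $M_1\to M_0$ carrying $p$ to its restriction; it offers no mechanism for establishing non-forking. In the paper conjugation is used only to secure the existence of $NF$ via Theorem~\ref{there is a $NF$ relation}, not inside the local-character argument.

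What the paper actually does is an iteration of length $\lambda^+$ terminated by Proposition~\ref{1.11} (the Fodor-type lemma). One attempts to build a two-dimensional array $\langle N_{\alpha,i}:\alpha<\lambda^+,\ i\le j{+}1\rangle$ above the given chain, with each column $\prec^*$-increasing continuous, successive rows linked by $NF$, and---for every $\alpha$ in the stationary set $S=\{\delta<\lambda^+:\cf(\delta)=j\}$---some element of $N_{\alpha,j+1}\setminus N_{\alpha,j}$ absorbed into $N_{\alpha+1,j}$. If this runs for all $\alpha<\lambda^+$, the $j$th and $(j{+}1)$st columns together with identity maps violate Proposition~\ref{1.11}; hence the construction halts, and it can halt only at a stage where $N_{\alpha,j}=N_{\alpha,j+1}$, at which point $\langle N_{\alpha,i}:i\le j\rangle$ is precisely the $NF$-tower containing $N_{j+1}$ that you need. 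Weak local character is what drives each successor step for $\alpha\in S$: applied to the diagonal $\prec^*$-chain $\langle N_{\beta(i),i}:i\le j\rangle^\frown\langle N_{\alpha,j+1}\rangle$, it produces an element and an index $i^*$, and the $NF$ axioms together with uniqueness in $\frak{s}$ are then used to build row $\alpha{+}1$ so that this element lands inside $N_{\alpha+1,j}$. This iterate-until-Fodor-fails mechanism is the missing idea in your proposal.
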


\begin{proof}
We will prove the conditions in Definition \ref{definition good
frame}:

1. Trivial.

2. (a),(b),(c) are trivial. (d) (basic stability) is satisfied by
Theorem \ref{4.0}(3).

3. (a) is trivial.

(b) is OK by the monotonicity of $NF$, i.e. Definition
\ref{5.1}(b).

Axiom (c) (local character) is the heart of the matter. Let $j$ be
a limit ordinal, let $\langle N_i:i \preceq j+1\rangle $ be an
increasing continuous sequence of models in $K_\lambda$ and let
$p=:ga-~tp(c,N_j,N_{j+1}) \in S^{na}(N_j)$. We have to find $i<j$
such that $p$ does not fork over $N_i$ in the sense of $nf^{NF}$,
i.e. $nf^{NF}(N_i,c,N_j,N_{j+1})$. It is enough to find an
increasing continuous sequence $\langle M_i:i \preceq j\rangle $
such that for each $i \preceq  j,\ N_i \preceq M_i$ and
$NF(N_i,\allowbreak N_{i+1},\allowbreak M_i,\allowbreak M_{i+1})$
(so $NF(N_i,N_j,M_i,M_j))$ and $N_{j+1} \preceq M_j$ (for some
$i<j$ $c \in M_i$, so $nf^{NF}(N_i,c,N_j,N_{j+1})$). Without loss
of generality, $cf(j)=j$. We try to construct $\langle
N_{\alpha,i}:i \preceq j+1\rangle$ by induction on $\alpha \in
\lambda^+$, such that:
\begin{enumerate}
\item For each $\alpha \in \lambda^+,\ \langle N_{\alpha,i}:i
\preceq j+1\rangle $ is an increasing continuous sequence of
models in $K_\lambda$. \item For each $i \preceq  j,\ \langle
N_{\alpha,i}:\alpha<\lambda^+\rangle $ is an $\prec^*$-increasing
continuous sequence of models in $K_\lambda$ and $N_{\alpha,j+1}
\preceq  N_{\alpha+1,j+1}$. \item $N_{0,i}=N_i$. \item For each
$i<j$ and $\alpha<\lambda^+$, we have $NF(N_{\alpha,i},\allowbreak
N_{\alpha,i+1},\allowbreak N_{\alpha+1,i},\allowbreak
N_{\alpha+1,i+1})$.
\item For each $\alpha \in S=:\{\delta \in \lambda^+:cf(\delta)=j\}$, we have $N_{\alpha,j+1} \bigcap N_{\alpha+1,j} \neq N_{\alpha,j}$.\\
\end{enumerate}

If we succeed, then by clauses (2) and (5), the quadruple
$$\langle N_{\alpha,j}:\alpha < \lambda^+\rangle ,\ \langle
N_{\alpha,j+1}:\alpha < \lambda^+ \rangle,\ \langle
id_{N_{\alpha_j}}:\alpha<\lambda^+ \rangle,\ S$$ forms a
counterexample to Claim \ref{1.11}, so it is impossible to carry
out this construction.

\case{Where will we get stuck?} For $\alpha=0$, we will not get
stuck, see item (3).

For $\alpha$ limit, just (1),(2) are relevant, and we just have to
take unions and use smoothness.

So we will get stuck at some successor ordinal. Suppose we have
defined $\langle N_{\alpha,i}:i \preceq j+1\rangle$. Can we find
$\langle N_{\alpha+1,i}:i \preceq j+1 \rangle$? If $\alpha \notin
S$, then it is easier, so assume $\alpha \in S$. Let $\langle
\beta(i):i \preceq  j+1\rangle $ be an increasing continuous
sequence of ordinals such that $\beta(j)=\alpha$. If
$N_{\alpha,j}=N_{\alpha,j+1}$, then we can define
$M_i:=N_{\alpha,i}$ and the local character is proved ($N_j
\preceq N_{\alpha,j}=M_j$, so see the beginning of the proof). So
without loss of generality, $N_{\alpha,j+1} \neq N_{\alpha,j}$.
\newpage

In the following diagram, the arrows describe the
$\prec^*$-increasing continuous sequence $\langle N_{\beta(i),i}:i
\preceq j\rangle ^\frown \langle N_{\alpha,j+1}\rangle $. A model
that appears at the right and above another model is bigger than
it.
\begin{displaymath}
\xymatrix{ N_{\alpha,i^*} & N_{\alpha,i^*+1} & N_{\alpha,i^*+2} &
N_{\alpha,j} \ar[r] & N_{\alpha,j+1} \ni c\\
 N_{\beta(i^*+2),i^*} &
N_{\beta(i^*+2),i^*+1} & N_{\beta(i^*+2),i^*+2} \ar[ur] &
N_{\beta(i^*+2),j} & N_{\beta(i^*+2),j+1}\\ N_{\beta(i^*+1)+1,i^*}
& N_{\beta(i^*+1)+1,i^*+1} & N_{\beta(i^*+1)+1,i^*+2}&
N_{\beta(i^*+1)+1,j} & N_{\beta(i^*+1)+1,j+1}\\
N_{\beta(i^*+1),i^*} & N_{\beta(i^*+1),i^*+1} \ar[uur] &
N_{\beta(i^*+1),i^*+2}&
N_{\beta(i^*+1),j} & N_{\beta(i^*+1),j+1}\\
N_{\beta(i^*),i^*} \ar[ur] & N_{\beta(i^*),i^*+1} &
N_{\beta(i^*),i^*+2} &N_{\beta(i^*),j} & N_{\beta(i^*),j+1} }
\end{displaymath}

By weak local character, there is an element $c$ and an ordinal
$i^*$ such that $ga-~tp(c,N_{\alpha,j},N_{\alpha,j+1})$ does not
fork over $N_{\beta(i^*),i^*}$.

By Definition \ref{definition of a good frame}(b) (the
monotonicity axiom), $ga-~tp(c,N_{\alpha,j},N_{\alpha,j+1})$ does
not fork over $N_{\alpha,i^*+1}$ and so
$ga-~tp(c,N_{\alpha,i^*+1},N_{\alpha,j+1}) \in
S(N_{\alpha,i^*+1})$. So there is an increasing continuous
sequence $\langle N^{temp}_{\alpha+1,i}:i \preceq  j\rangle $ such
that for $i<j$ we have $NF(N_{\alpha,i},\allowbreak
N_{\alpha,i+1},\allowbreak N^{temp}_{\alpha+1,i},\allowbreak
N^{temp}_{\alpha+1,i+1})$, and there is $a \in
N^{temp}_{\alpha+1,i^*+1}$ such that
$ga-~tp(a,N_{\alpha,i^*+1},N^{temp}_{\alpha+1,i^*+1})=ga-~tp(c,N_{\alpha,i^*+1},N_{\alpha,j+1})$.
[Why? For $i\preceq  i^*$ define
$N^{temp}_{\alpha+1,i}=N_{\alpha,i}$. Choose
$N^{temp}_{\alpha+1,i^*+1}$ which is isomorphic to
$N_{\alpha,j+1}$ over $N_{\alpha,i^*+1}$ and
$N^{temp}_{\alpha+1,i^*+1} \bigcap
N_{\alpha,j+1}=N_{\alpha,i^*+1}$. For $i \in (i^*+1,j]$ choose
$N^{temp}_{\alpha+1,i+1}$ such that $NF(N_{\alpha,i},\allowbreak
N_{\alpha,i+1},\allowbreak N^{temp}_{\alpha+1,i},\allowbreak
N^{temp}_{\alpha+1,i+1})$. If $i$ is limit, then define
$N^{temp}_{\alpha+1,i}:=\bigcup
\{N_{\alpha+1,\epsilon}:\epsilon<i\}$]. Now by the long
transitivity of $NF$ we have $NF(N_{\alpha,i^*+1},\allowbreak
N_{\alpha,j},\allowbreak N^{temp}_{\alpha+1,i^*+1},\allowbreak
N^{temp}_{\alpha+1,j})$ and so since $NF$ respects $s$, the type
$ga-~tp(a,N_{\alpha,j},N^{temp}_{\alpha+1,j})$ does not fork over
$N_{\alpha,i^*+1}$. So by Definition 2.1(e), (the uniqueness of
the non-forking extension),
$ga-~tp(a,N_{\alpha,j},N^{temp}_{\alpha+1,j})=ga-~tp(c,N_{\alpha,j},N_{\alpha,j+1})$.
Hence by the definition of the equality between types, without
loss of generality, there is a model $N_{\alpha+1,j+1}$ such that
$N_{\alpha,j+1} \preceq N_{\alpha+1,j+1}$, there is an embedding
$f:N^{temp}_{\alpha+1,j} \hookrightarrow N_{\alpha+1,j+1}$ over
$N_{\alpha,j}$ and $f(a)=c$. Now for $i \preceq  j$ define
$N_{\alpha+1}:=f[N^{temp}_{\alpha+1,i}]$. Why is (5) satisfied? $c
\in N_{\alpha,j+1} \bigcap N_{\alpha+1,i+1}-N_{\alpha,i+1}$. By
(4) and the long transitivity of $NF$, we have
$NF(N_{\alpha,i+1},N_{\alpha,j},N_{\alpha+1,i+1},N_{\alpha+1,j})$,
so $c \notin N_{\alpha,j}$, but since $N_{\alpha+1,i+1} \subset
N_{\alpha+1,j}$ we have $c \in N_{\alpha+1,j}$. Hence $c \in
N_{\alpha,j+1} \bigcap N_{\alpha+1,j}-N_{\alpha,j}$) Hence we can
carry out the construction.



(d) Uniqueness: suppose for $n<2,\ ga-~tp(a^n,M_0,M_1^n)$ does not
depend on $n$, and $NF(M_0,\nf M_2,\nf M_1^n,\nf M_3^n)$, see the
diagram below. We have to prove that $ga-~tp(a^n,M_2,M_3^n)$ does
not depend on $n$. By the definition of the equality between
types, there is an amalgamation $f^0,f^1,M_1$ of $M_1^0,M_1^1$
over $M_0$. So there are models $M_3^{n,+}$ and embeddings
$f_n^+:M_3^n \hookrightarrow M_3^{n,+}$, such that for $n<2$ we
have $NF(f_n[M_1^n],\nf f_n^+[M_3^n],\nf M_1,\nf M_3^{n,+})$ and
$f_n \subset f_n^+$. Since $M_2 \bigcap M_1^n=M_0$, without loss
of generality, $f_n^+ \restriction M_2=id_{M_2}$ (we can change
the names of the elements in $M_2-M_0$, i.e. $M_2-M_1^n$). By the
long transitivity axiom of $NF$, we have $NF(M_0, \nf M_2,\nf
M_1,\nf M_3^{n,+})$. So by the uniqueness of $NF$, there is a
joint embedding $g^0,g^1,M_3$ of $M_3^{0,+},M_3^{1,+}$ over $M_1
\bigcup M_2$. So $g^0 \circ f_0^+,\nf g^1 \circ f_1^+,\nf M_3$ is
an amalgamation of $M_3^0,\nf M_3^1$ over $M_2$. Since $a_n \in
M_1^n$, $(g^n \circ f_n^+)(a_n)=f_n(a_n)$ and so it does not
depend on $n$ (since $f_0,f_1$ are witnesses for
$ga-~tp(a_1,M_0,M_1^n)$ does not depend on $n$). So
$ga-~tp(a^n,M_2,M_3^n)$ does not depend on $n$.

\begin{displaymath}
\xymatrix{ M_1 \ar[r] & M_3^{n,+} \ar[r]^{g^n} & M_3\\
M_1^n \ar[u]^{f_n} \ar[r] & M_3^n \ar[u]^{f_n^+}\\
M_0 \ar[u] \ar[r] & M_2 \ar[u] }
\end{displaymath}

(e) Symmetry: by the symmetry of $NF$, i.e. Definition
\ref{5.1}(e).

(f) By the corresponding axiom of $NF$, i.e. Definition
\ref{5.1}(c).

(g) Continuity: it is easy to see that continuity follows by local
character, because by definition, $s^{NF}$ is full.
\end{proof}

Now we can present the main theorem: we get a good
$\lambda$-frame.
\begin{theorem}
\label{11.0} Let $(K,\preceq)$ be an AEC such that:
\begin{enumerate}
\item $K$ is categorical in $\lambda$,$\lambda^+$ and $1 \leq
I(\lambda^{+2},K)<\mu_{unif}(\lambda^{+2},2^{\lambda^+})$. \item
$2^\lambda<2^{\lambda^+}<2^{\lambda^{+2}}$, and $WDmId(\lambda^+)$
is not saturated in $\lambda^{+2}$.
\end{enumerate}
\underline{Then:}\\
there is an almost good $\lambda$-frame, $\frak{s}$ with
complete... $(K_\frak{s},\preceq_\frak{s})=((K_\lambda,\preceq)$
and a type is basic if it is minimal. Moreover, if $\frak{s}$
satisfies the conjugation property, then there is a good
$\lambda$-frame with $(K_s,\preceq_s)=((K,\preceq )$.
\end{theorem}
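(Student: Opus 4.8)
The plan is to prove the two assertions separately: first to produce the almost good $\lambda$-frame $\frak{s}$ whose basic types are the minimal types, and then, under the additional conjugation hypothesis, to upgrade $\frak{s}$ to a full good $\lambda$-frame. The whole argument is an assembly of results already in place; the point of conditions (1) and (2) is that they are exactly the categoricity and set-theoretic hypotheses needed to invoke those results.

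For the first stage I would read off the almost good frame from the construction of \cite{she46}. The hypotheses there --- categoricity in $\lambda$ and $\lambda^+$, the bound $1 \le I(\lambda^{+2},K) < \mu_{unif}(\lambda^{+2},2^{\lambda^+})$, the chain $2^\lambda < 2^{\lambda^+} < 2^{\lambda^{+2}}$, and the non-saturation of $WDmId(\lambda^+)$ in $\lambda^{+2}$ --- are precisely conditions (1) and (2). Taking $S^{bs}(M)$ to be the minimal types over $M \in K_\lambda$ and $\dnf$ to be the relation produced there, one checks directly that clauses (1), (2), (3) and clauses (4)(a),(b),(d),(e),(f),(g) of Definition \ref{definition good frame} hold, while in place of the local character clause (4)(c) one has only the weak local character of Definition \ref{definition of an almost good frame}(2). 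Thus $\frak{s} = (K_\lambda, \preceq, S^{bs}, \dnf)$ is an almost good $\lambda$-frame whose basic types are the minimal types, which is the first assertion.

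For the second stage, assume in addition that $\frak{s}$ satisfies the conjugation property (Definition \ref{definition of conjugation}). I would first apply Theorem \ref{there is a $NF$ relation}: its five hypotheses are categoricity in $\lambda$ (from (1)), that $\frak{s}$ is an almost good $\lambda$-frame with the conjugation property (the first stage plus the standing assumption), and the three set-theoretic conditions $I(\lambda^{++},K) < \mu_{unif}(\lambda^{++},2^{\lambda^+})$, $2^\lambda < 2^{\lambda^+} < 2^{\lambda^{++}}$, and non-saturation of $WDmId(\lambda^+)$ in $\lambda^{++}$ --- which coincide with (1),(2) because $\lambda^{++} = \lambda^{+2}$. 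This produces a relation $NF$ with $\bigotimes_{NF}$ that respects $\frak{s}$. Feeding this $NF$ into Theorem \ref{5.16}, whose only hypothesis is that $\frak{s}$ be an almost good $\lambda$-frame with the conjugation property, I obtain that $\frak{s}^{NF} = (K, \preceq, S^{na}, nf^{NF})$ is a full good $\lambda$-frame on $(K_\lambda,\preceq)$, as required.

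The content here is bookkeeping rather than a new idea, so the main thing to guard against is a hypothesis mismatch. I would check carefully that the categoricity and cardinal-arithmetic assumptions transcribe verbatim into the hypothesis list of Theorem \ref{there is a $NF$ relation} and into the standing hypothesis of the section containing Theorem \ref{5.16}, and in particular that $\lambda^{++}$ and $\lambda^{+2}$ are identified throughout. The genuine difficulty --- deriving full local character from weak local character --- has already been discharged inside the proof of Theorem \ref{5.16}, which blocks the bad $\lambda^+$-length construction via the Fodor-type Proposition \ref{1.11}; the extra categoricity in $\lambda^+$ is consumed only in the first stage through \cite{she46}, since Theorem \ref{5.16} itself asks only for categoricity in $\lambda$. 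The one remaining point to confirm is that the underlying pair of the frame delivered by Theorem \ref{5.16} is indeed $(K_\lambda,\preceq)$, so that the conclusion matches the displayed claim.
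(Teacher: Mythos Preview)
Your proposal is correct and follows the same route as the paper. The paper's own proof is two sentences: invoke \cite[0.2]{she46} for the almost good frame, then Theorem \ref{5.16} for the ``moreover''. You have simply unpacked this, making explicit that Theorem \ref{5.16} silently presupposes the existence of the relation $NF$, which in turn is supplied by Theorem \ref{there is a $NF$ relation} under exactly the hypotheses (1) and (2); this is a helpful clarification, since the standing hypothesis of Section 4 does not by itself guarantee that $NF$ exists.
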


\begin{remark}
Background on Weak Diamond appears in \cite{ds} and in Chapter 13
of \cite{grbook}. Concerning $\mu_{unif}(\mu^+,2^\mu)$, see the
last chapter of \cite{shh}, \cite{jrsh875} or \cite{jrsh966}. It
is "almost $2^{\mu^+}$": $1<\mu_{unif}(\mu^+,2^\mu)$, If
$\beth_\omega \preceq  \mu$, then
$\mu_{unif}(\mu^+,2^\mu)=2^{\mu^+}$ and in any case it is not
clear if $\mu_{unif}(\mu^+,2^\mu)<2^{\mu^+}$ is consistent. There
are more claims which say that it is a "big cardinal".
\end{remark}

\begin{proof}
By Theorem \cite[0.2]{she46} there is such an almost good frame.
So by Theorem \ref{5.16} we have the ``moreover''.
\end{proof}

While in \cite{shh}.II we obtained a good $\lambda^+$-frame, here
we obtained a $\lambda$-good frame. Why is this important? In
Section 1 of \cite{shh}.III, Shelah defined weakly dimensionality
of a good frame, and proved that it is equal to the categoricity
in the successor cardinal. Since here we assume categoricity in
$\lambda^+$, the good $\lambda$-frame we obtained here is weakly
dimensional.

\section{The function $\lambda \to I(\lambda,K)$ is not arbitrary}
In this section, we prove, under set theoretical assumptions, that
there is no AEC, $(K,\preceq)$, which is categorical in
$\lambda,\lambda^+...\lambda^{+(n-1)}$, but has no model of
cardinality $\lambda^{+n}$. The main results of Section 4 enables
to prove only a weaker version of this theorem. But we can prove
this theorem, using results of \cite{she46} and \cite{shh}.II.

By the last section in \cite{shh}.II (alternatively, see Corollary
\cite[12.6]{jrsh875}):
\begin{fact}\label{11.2}
Suppose:
\begin{enumerate}
\item $n<\omega$, \item $\frak{s}=(K,\preceq,S^{bs},nf)$ is a good
$\lambda$-frame , \item For each $m<n$,
$I(\lambda^{+(2+m)},K)<\mu_{unif}(\lambda^{+(2+m)},2^{\lambda^{+(1+m)}})$,
\item
$2^\lambda<2^{\lambda^+}<2^{\lambda^{+2}}<...<2^{\lambda^{+(1+n)}}$,
 \item For each $m<n$, the ideal $WDmId(\lambda^{+1+m})$ is not saturated in
$\lambda^{+(2+m)}$.
\end{enumerate}
\emph{Then}
there is a model in $K$ of cardinality $\lambda^{+(2+n)}$.
\end{fact}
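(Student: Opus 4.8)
The plan is to prove Fact~\ref{11.2} by induction on $n$, iterating the ``successor frame'' construction of \cite{shh}.II (equivalently the version recorded in \cite{jrsh875}) and then extracting the desired model from the top frame. Concretely, I would establish by induction on $k \leq n$ that there is a good $\lambda^{+k}$-frame $\frak{s}^{(k)} = (K^{(k)}, \preceq^{(k)}, S^{bs,(k)}, nf^{(k)})$ whose underlying AEC in $\lambda^{+k}$ satisfies $K^{(k)}_{\lambda^{+k}} \subseteq K_{\lambda^{+k}}$ and $\preceq^{(k)} \subseteq {\preceq}\restriction K^{(k)}$, the point being that the $\frak{s}^{(k)}$ live on the saturated models of $K$ of size $\lambda^{+k}$ (so their directed unions, of any size, are genuine members of $K$). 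The base case $k=0$ is the hypothesis itself: $\frak{s}^{(0)} = \frak{s}$ is a good $\lambda$-frame.

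For the inductive step, suppose $\frak{s}^{(m)}$ is a good $\lambda^{+m}$-frame with $m<n$. I would apply the successor-frame theorem of \cite{shh}.II to $\frak{s}^{(m)}$ at level $\mu := \lambda^{+m}$; its hypotheses are precisely $2^\mu < 2^{\mu^+}$, the bound $I(\mu^{++}, K) < \mu_{unif}(\mu^{++}, 2^{\mu^+})$, and the non-saturation of $WDmId(\mu^+)$ in $\mu^{++}$. Since $\mu^+ = \lambda^{+(m+1)}$ and $\mu^{++} = \lambda^{+(m+2)}$, these are exactly the instances of our hypotheses (4), (3) and (5) for this value of $m$. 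The output is a good $\mu^+ = \lambda^{+(m+1)}$-frame $\frak{s}^{(m+1)}$, again carried by the saturated models of $K$ of size $\lambda^{+(m+1)}$. Running this for $m = 0,1,\dots,n-1$ produces the good $\lambda^{+n}$-frame $\frak{s}^{(n)}$.

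Finally, I would invoke the existence result for good frames from \cite{shh}.II: a good $\mu$-frame together with $2^\mu < 2^{\mu^+}$ (the Weak Diamond at $\mu$) already yields a model of $K$ of cardinality $\mu^{++}$ — this is the step that makes the ``few models in $\mu^{++}$'' hypotheses meaningful in the first place. Applying it to $\frak{s}^{(n)}$ with $\mu = \lambda^{+n}$, using the top inequality $2^{\lambda^{+n}} < 2^{\lambda^{+(n+1)}}$ of hypothesis (4), produces a model of $K^{(n)}$, hence of $K$, of cardinality $\lambda^{+(n+2)} = \lambda^{+(2+n)}$, as required.

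The main obstacle is concentrated entirely in the single successor step, i.e.\ the construction of a good $\mu^+$-frame from a good $\mu$-frame. This is the deep content of \cite{shh}.II: one must manufacture enough saturated models in $\mu^+$, establish amalgamation and the uniqueness of non-forking extensions one cardinal up, and recover local character at $\mu^+$; it is exactly here that the Weak Diamond combinatorics enter, with the non-saturation of $WDmId(\mu^+)$ and the bound $I(\mu^{++},K) < \mu_{unif}$ being consumed to rule out the ``many models'' side of Shelah's dichotomy. By contrast, the inductive bookkeeping is routine: one checks that hypotheses (3)--(5) shift correctly to level $\mu = \lambda^{+m}$ at each stage and that the derived classes remain subclasses of $K$. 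Since we perform exactly $n$ successor steps, consuming (3) and (5) only for $m<n$, the concluding existence step at level $\lambda^{+n}$ needs just the remaining instance $2^{\lambda^{+n}} < 2^{\lambda^{+(n+1)}}$ of (4) --- which is precisely why the chain in (4) is required to run all the way up to $2^{\lambda^{+(1+n)}}$.
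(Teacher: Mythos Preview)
Your proposal is correct and matches the paper's approach: the paper does not prove Fact~\ref{11.2} at all but simply cites it from the last section of \cite{shh}.II (alternatively Corollary~12.6 of \cite{jrsh875}), and the iterated successor-frame construction you outline is exactly the content of those references. Your bookkeeping of which instances of hypotheses (3)--(5) are consumed at each step, and why the chain in (4) must reach $2^{\lambda^{+(1+n)}}$, is accurate.
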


By the following theorem, if $f:card \to card$ is a class function
(from the cardinals to the cardinals) with
$f(\lambda)=f(\lambda^+)=...f(\lambda^{+(n-1)})=1$ and
$f(\lambda^{+n})=0$, then under specific set theoretical
assumptions (clauses (4),(5), below), $f$ cannot be the spectrum
of categoricity of any AEC.
\begin{theorem} \label{11.3}
There are no $K,\preceq,n,\lambda$ such that
\begin{enumerate}
\item $n\geq 3$ is a natural number,  \item $(K,\preceq)$ is an
AEC, \item $K$ is categorical in $\lambda^{+m}$ for each $m<n$,
but $K_{\lambda^{+n}}=\emptyset$, \item
$2^\lambda<2^{\lambda^+}<2^{\lambda^{+2}}<...<2^{\lambda^{+(n-1)}}$,
\item For each $m<n-2$, $WDmId(\lambda^{+1+m})$ is not saturated
in
$\lambda^{+(2+m)}$. 
\end{enumerate}
\end{theorem}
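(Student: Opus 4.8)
The plan is to argue by contradiction: assume $K,\preceq,n,\lambda$ satisfy clauses (1)--(5), and then manufacture a model of cardinality $\lambda^{+n}$, contradicting $K_{\lambda^{+n}}=\emptyset$. The whole argument is a bookkeeping reduction, first to Theorem~\ref{11.0} to produce a good $\lambda$-frame, and then to Fact~\ref{11.2} to build the forbidden model; the one genuinely non-formal step is the verification of the conjugation property.

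First I would check the hypotheses of Theorem~\ref{11.0}. Since $n\geq 3$, clause (3) gives categoricity in $\lambda$, $\lambda^+$ and $\lambda^{+2}$; in particular $I(\lambda^{+2},K)=1$, and since $1<\mu_{unif}(\lambda^{+2},2^{\lambda^+})$ (the remark on $\mu_{unif}$, applied with $\mu=\lambda^+$) we obtain $1\leq I(\lambda^{+2},K)<\mu_{unif}(\lambda^{+2},2^{\lambda^+})$. Clause (4), legitimate because $n-1\geq 2$, supplies $2^\lambda<2^{\lambda^+}<2^{\lambda^{+2}}$, and clause (5) with $m=0$ (permissible since $0<n-2$) supplies that $WDmId(\lambda^+)$ is not saturated in $\lambda^{+2}$. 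Thus Theorem~\ref{11.0} produces an almost good $\lambda$-frame $\frak{s}$ on $(K_\lambda,\preceq)$.

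The main obstacle is then the passage from the almost good frame to a good $\lambda$-frame through the ``moreover'' clause of Theorem~\ref{11.0}, which requires the conjugation property. I would extract it from categoricity in $\lambda^+$: the $\lambda^+$-saturated model is unique and strongly model-homogeneous, so any isomorphism between the $\lambda$-sized models $M_0\cong M_1$ extends to one of it, and combining this homogeneity with the uniqueness of the non-forking extension forces, for $p\in S^{bs}(M_1)$ not forking over $M_0$, an isomorphism $f\colon M_1\to M_0$ with $f(p)=p\restriction M_0$. This is precisely the step that consumes $\lambda^+$-categoricity, and where I expect the real work to lie. Granting it, the ``moreover'' of Theorem~\ref{11.0} yields a good $\lambda$-frame $\frak{s}=(K,\preceq,S^{bs},nf)$.

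Finally I would feed $\frak{s}$ into Fact~\ref{11.2} with its internal parameter set to $n-2$, a natural number since $n\geq 3$. Its clause (3) asks, for each $m<n-2$, that $I(\lambda^{+(2+m)},K)<\mu_{unif}(\lambda^{+(2+m)},2^{\lambda^{+(1+m)}})$; as $m<n-2$ gives $2+m<n$, clause (3) of the present theorem yields categoricity, hence $I(\lambda^{+(2+m)},K)=1<\mu_{unif}$. Its clause (4) becomes exactly $2^\lambda<\dots<2^{\lambda^{+(1+(n-2))}}=2^{\lambda^{+(n-1)}}$, which is clause (4) here, and its clause (5) becomes exactly clause (5) here. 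Hence Fact~\ref{11.2} delivers a model in $K$ of cardinality $\lambda^{+(2+(n-2))}=\lambda^{+n}$, contradicting $K_{\lambda^{+n}}=\emptyset$. This completes the reduction, so no $K,\preceq,n,\lambda$ as in the statement can exist.
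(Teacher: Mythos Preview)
Your approach is precisely the route the paper isolates as its ``weaker version'' (the Proposition stated just before the proof of Theorem~\ref{11.3}): obtain a good $\lambda$-frame via Theorem~\ref{11.0} and feed it into Fact~\ref{11.2} with internal parameter $n-2$. The paper, however, treats the conjugation property as an \emph{additional hypothesis} for that Proposition and does not derive it from categoricity in $\lambda^+$; the accompanying Remark says only that by \cite[7.4]{she46} ``it is reasonable to assume'' conjugation. Your sketch of the conjugation argument is too loose to close this gap: model-homogeneity of the unique $\lambda^+$-model lets you extend any isomorphism $g:M_1\to M_0$ to an automorphism, but that only yields $g(p)\in S^{bs}(M_0)$, not $g(p)=p\restriction M_0$. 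To hit $p\restriction M_0$ on the nose you would need a back-and-forth argument over brimmed/limit models combined with uniqueness of non-forking extensions, and nothing in hypotheses (1)--(5) guarantees that arbitrary $M_0\preceq M_1$ in $K_\lambda$ are of that special form.

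For the full Theorem~\ref{11.3} the paper bypasses conjugation entirely: it invokes \cite[II.3.7]{shh} to obtain directly a good $\lambda^+$-frame on $(K,\preceq)$ (this is where categoricity in $\lambda,\lambda^+$ and the relevant set-theoretic assumptions are consumed), and then applies Fact~\ref{11.2} with $\lambda^+$ playing the role of $\lambda$ and internal parameter $n-3$, producing a model in $K_{\lambda^{+n}}$. So the gap in your proposal is genuine: without an actual proof of conjugation from (1)--(5), you have re-established only the weaker Proposition, not the Theorem itself.
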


Before we prove Theorem \ref{11.3}, we prove a weaker version of
it:
\begin{proposition}
The same as Theorem \ref{11.3}, but here we assume, in addition,
that if $M_0 \preceq M_1 \preceq M_2$, $a \in M_2-M_1$ and
$ga-~tp(a,M_0,M_2)$ is minimal, then the types
$ga-~tp(a,M_1,M_2),ga-~tp(a,M_0,M_2)$ are conjugate.
\end{proposition}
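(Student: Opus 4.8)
The plan is to argue by contradiction and simply assemble the two machines already available: Theorem~\ref{11.0}, which converts categoricity plus set theory into a good $\lambda$-frame, and Fact~\ref{11.2}, which pushes a good $\lambda$-frame up the cardinal ladder to produce a model in a higher cardinal. So suppose toward a contradiction that $K,\preceq,n,\lambda$ satisfy clauses (1)--(5) of Theorem~\ref{11.3} together with the extra conjugation hypothesis of the Proposition.

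First I would check the hypotheses of Theorem~\ref{11.0}. Since $n\geq 3$, categoricity of $K$ in $\lambda^{+m}$ for every $m<n$ gives, in particular, categoricity in $\lambda$, $\lambda^+$ and $\lambda^{+2}$; hence $I(\lambda^{+2},K)=1$, so $1\leq I(\lambda^{+2},K)<\mu_{unif}(\lambda^{+2},2^{\lambda^+})$, using that $1<\mu_{unif}$ always holds. The initial segment of the arithmetic chain in clause (4) supplies $2^\lambda<2^{\lambda^+}<2^{\lambda^{+2}}$, and clause (5) with $m=0$ says exactly that $WDmId(\lambda^+)$ is not saturated in $\lambda^{+2}$. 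Thus Theorem~\ref{11.0} yields an almost good $\lambda$-frame $\frak{s}$ on $(K_\lambda,\preceq)$ whose basic types are precisely the minimal types.

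Next I would use the extra hypothesis to invoke the ``moreover'' clause of Theorem~\ref{11.0}, for which it suffices to verify that $\frak{s}$ has the conjugation property of Definition~\ref{definition of conjugation}. Let $p\in S^{bs}(M_1)$ not fork over $M_0$, say $p=\tp(a,M_1,M_3)$ for a realization $a$ in some $M_3\succeq M_1$. By the interpretation of non-forking, $p\restriction M_0=\tp(a,M_0,M_3)\in S^{bs}(M_0)$, hence is minimal. Applying the extra hypothesis to $M_0\preceq M_1\preceq M_3$ and $a$, the types $\tp(a,M_1,M_3)$ and $\tp(a,M_0,M_3)$ are conjugate, i.e.\ there is an isomorphism $f:M_1\to M_0$ with $f(p)=p\restriction M_0$. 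This is the conjugation property verbatim, so Theorem~\ref{11.0} now provides a genuine good $\lambda$-frame $\frak{s}$ with $(K_{\frak{s}},\preceq_{\frak{s}})=(K,\preceq)$.

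Finally I would feed this good $\lambda$-frame into Fact~\ref{11.2} with the parameter $n-2$ in place of its $n$. The point to watch is the index arithmetic: the conclusion of Fact~\ref{11.2} with parameter $n-2$ is a model of $K$ of cardinality $\lambda^{+(2+(n-2))}=\lambda^{+n}$, exactly the cardinal that clause (3) of Theorem~\ref{11.3} declares empty. Its five clauses all hold with $n-2$: clause (1) since $1\leq n-2<\omega$; clause (2) is the good frame just built; clause (3) because for $m<n-2$ we have $2+m<n$, so categoricity gives $I(\lambda^{+(2+m)},K)=1<\mu_{unif}(\lambda^{+(2+m)},2^{\lambda^{+(1+m)}})$; clause (4) is clause (4) of the Proposition, whose top term is $2^{\lambda^{+(n-1)}}=2^{\lambda^{+(1+(n-2))}}$; and clause (5) is clause (5) of the Proposition verbatim. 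Fact~\ref{11.2} then produces a model of $K$ in $\lambda^{+n}$, contradicting $K_{\lambda^{+n}}=\emptyset$. The genuinely mathematical step is the conjugation verification of the previous paragraph; everything else is bookkeeping, and the only delicate bookkeeping is arranging the offsets so that $2+(n-2)$ lands on $n$ while the categoricity inputs to clause (3) stay strictly below $\lambda^{+n}$.
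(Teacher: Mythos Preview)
Your proof is correct and follows exactly the same two-step strategy as the paper: invoke Theorem~\ref{11.0} (using the extra hypothesis to secure the conjugation property for the ``moreover'' clause) to obtain a good $\lambda$-frame, then apply Fact~\ref{11.2} with the shifted parameter to produce a model in $K_{\lambda^{+n}}$. The paper's own proof is a two-line sketch that leaves the conjugation verification and the index arithmetic for Fact~\ref{11.2} implicit; you have simply filled in those details carefully.
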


\begin{proof}
By Theorem \ref{11.0}, there is a good $\lambda$-frame with
$(K_s,\preceq_s)=((K,\preceq )$. Hence by Fact $\ref{11.2}$, there
is a model in $K$ of cardinality $\lambda^{+(n)}$.
\end{proof}

\begin{remark}
To our opinion, by Claim \cite[7.4]{she46}(p.~76), it is
reasonable to assume that $\frak{s}$ satisfies the conjugation
property.
\end{remark}

Now we prove Theorem \ref{11.3}:
\begin{proof}
By Theorem \cite[II.3.7]{shh}(p.~297), there is a good
$\lambda^+$-frame, $\frak{s}$ such that its AEC is $(K,\preceq)$.
Now use Fact \ref{11.2}, where $\lambda^+$ stands for $\lambda$.
\end{proof}





\end{document}